\documentclass[11pt,a4paper]{amsart}
\usepackage{amssymb,amsmath,epsfig,graphics,mathrsfs,enumerate,verbatim}
\usepackage[pagebackref,colorlinks=true,linkcolor=blue,citecolor=blue]{hyperref}
\usepackage{fancyhdr}
\pagestyle{fancy}
\fancyhead[RO,LE]{\small\thepage}
\fancyhead[LO]{\small \emph{\nouppercase{\rightmark}}}
\fancyhead[RE]{\small \emph{\nouppercase{\rightmark}}}
\fancyfoot[L,R,C]{}



\usepackage{hyperref}
\hypersetup{
 colorlinks   = true,
 urlcolor     = blue,
 linkcolor    = blue,
 citecolor   = red ,
 bookmarksopen=true
}


\usepackage{amsmath}
\usepackage{amsfonts}
\usepackage{amssymb}
\usepackage{amsthm}
\usepackage{epsfig,graphics,mathrsfs}
\usepackage{graphicx}
\usepackage{dsfont}

\usepackage[usenames, dvipsnames]{color}

\usepackage{hyperref}

 \textwidth = 16.1cm
  \textheight = 19.63cm

 \hoffset = -1.6cm

\def \phi {\varphi}

\def \R {\mathbb{R}}

\def \vf{\varphi}

\def \So {\mathscr{S}(\Rm)}


\newcommand{\Rm}{\mathbb R^m}

\newcommand{\p}{\partial}

\newcommand{\la}{\lambda}

\numberwithin{equation}{section}

\newcommand{\beq}{\begin{equation}}
\newcommand{\bea}[1]{\begin{array}{#1} }
\newcommand{\eeq}{ \end{equation}}
\newcommand{\ea}{ \end{array}}




\newcommand{\sa}{\langle}
\newcommand{\da}{\rangle}






\newtheorem{theorem}{Theorem}[section]
\newtheorem{lemma}[theorem]{Lemma}
\newtheorem{proposition}[theorem]{Proposition}
\newtheorem{corollary}[theorem]{Corollary}
\newtheorem{remark}[theorem]{Remark}

\numberwithin{equation}{section}

\begin{document}

\title[Uncertainty principles,  etc.]{Uncertainty principles for the imaginary Ornstein-Uhlenbeck operator}

\dedicatory{I dedicate this paper to Carlos Kenig,  long-time friend and deeply influential mathematician, on the occasion of his birthday}

\keywords{Uncertainty principle. Schr\"odinger equation. Ornstein-Uhlenbeck operator}

\subjclass{35H20, 35B09, 35R03, 53C17, 58J60}

\date{}

\begin{abstract}
We prove two forms of uncertainty principle for the Schr\"odinger group generated by the Ornstein-Uhlenbeck operator. As a consequence, we derive a related (in fact, equivalent) result for the imaginary harmonic oscillator. 
\end{abstract}

\author{Nicola Garofalo}

\address{Dipartimento d'Ingegneria Civile e Ambientale (DICEA)\\ Universit\`a di Padova\\ Via Marzolo, 9 - 35131 Padova,  Italy}
\vskip 0.2in
\email{nicola.garofalo@unipd.it}

\thanks{The author is supported in part by a Progetto SID (Investimento Strategico di Dipartimento): ``Aspects of nonlocal operators via fine properties of heat kernels", University of Padova (2022); and by a PRIN (Progetto di Ricerca di Rilevante Interesse Nazionale) (2022): ``Variational and analytical aspects of geometric PDEs". He is also partially supported by a Visiting Professorship at the Arizona State University}

\maketitle


\section{Introduction}\label{S:intro}

Given $m\in \mathbb N$, let $L$ indicate the Ornstein-Uhlenbeck operator in $\Rm$ defined by 
\[
L \vf = \Delta \vf - \sa x,\nabla \vf\da,
\]
see \cite{OU}.
As it is well-known, see \cite{Bo}, the invariant measure for $L$ is $d\gamma(x) = e^{-\frac{|x|^2}2} dx$. The operator $L$ is symmetric with respect to $d\gamma$, i.e., for any $\vf, \psi\in C^\infty_0(\Rm)$,
\[
\int_{\Rm} \vf L\psi d\gamma = \int_{\Rm} \psi L\vf d\gamma.
\]
Since $L$ is self-adjoint in $L^2(\Rm,d\gamma)$, by Stone's theorem, see \cite[Theorem 1, p. 345]{Yo}, there exists a strongly-continuous group $e^{i t L}$ of unitary operators in $L^2(\Rm,d\gamma)$. Such group provides the solution operator for the Cauchy problem in $\Rm\times (0,\infty)$ for the Schr\"odinger equation 
\begin{equation}\label{cp0}
\begin{cases}
\p_t f - i L f = 0,
\\
f(x,0) = \vf(x).
\end{cases}
\end{equation}
Notably, the differential equation in \eqref{cp0} is invariant with respect to the (complex) left-translation 
\[
\tau_{(x,s)} (y,t) = (x,s)\circ (y,t) = (y + e^{it}x,t+s),
\]
in the sense that $(\p_t  - i L)(\tau_{(x,s)} f) = \tau_{(x,s)}[(\p_t  - i L) f]$. It is worth emphasising here that \eqref{cp0} represents a basic model for a more general class of (possibly) degenerate operators of interest in mathematics and physics, introduced in the work \cite{Ho}. 

The main objective of this note is the following form of uncertainty principle for the group $e^{i t L}$. When $\vf\in L^2(\Rm,d\gamma)$, we will write $f(x,t) = e^{itL}\vf(x)$.  

\begin{theorem}\label{T:main}
Assume that for some $a, b>0$ one has
\begin{equation}\label{L2}
||e^{a|\cdot|^2} f(\cdot,0)||_{L^2(\Rm, d\gamma)} + ||e^{b|\cdot|^2}  f(\cdot,s)||_{L^2(\Rm,d\gamma)}<\infty.
\end{equation}
If $a b \sin^2 s  \ge \frac{1}{16}$, then $f\equiv 0$ in $\Rm\times \R$. 
\end{theorem}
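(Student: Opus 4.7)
The plan is to exploit the fact that the imaginary Ornstein--Uhlenbeck group $e^{itL}$ is unitarily equivalent to the Schr\"odinger propagator of a harmonic oscillator, then pass through the Mehler kernel to reformulate \eqref{L2} as twin Gaussian-weighted $L^2$ bounds for a Fourier pair, and finally apply Hardy's $L^2$ uncertainty principle.

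First, I would introduce the unitary isomorphism $U:L^2(\Rm,d\gamma)\to L^2(\Rm,dx)$ defined by $U\varphi(x)=e^{-|x|^2/4}\varphi(x)$. A direct computation yields $U L U^{-1}=\Delta-|x|^2/4+m/2=-H+m/2$, where $H:=-\Delta+|x|^2/4$ is a harmonic oscillator. Setting $g(\cdot,t):=Uf(\cdot,t)$, the equation in \eqref{cp0} becomes $\p_t g=-iHg+i(m/2)g$, and the cancellation between $|U1|^2=e^{-|x|^2/2}$ and $d\gamma$ gives
\[
\|e^{c|\cdot|^2}f(\cdot,t)\|_{L^2(\Rm,d\gamma)}=\|e^{c|\cdot|^2}g(\cdot,t)\|_{L^2(\Rm,dx)}
\]
for every $c>0$. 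Thus \eqref{L2} translates into the analogous weighted $L^2(dx)$-conditions on $g$ at times $0$ and $s$.

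Next, I would invoke the explicit Mehler representation for $e^{-isH}$. After absorbing the $e^{ism/2}$ phase, it takes the form
\[
g(x,s)=\frac{C_m}{(\sin s)^{m/2}}\, e^{\frac{i\cos s}{4\sin s}|x|^2}\int_{\Rm} e^{-\frac{i\sa x,y\da}{2\sin s}}\, h(y)\, dy,
\]
where $h(y):=g(y,0)\,e^{\frac{i\cos s}{4\sin s}|y|^2}$ satisfies $|h|=|g(\cdot,0)|$, and the integral equals $\hat h(x/(2\sin s))$ in the convention $\hat h(\xi)=\int h(y)e^{-i\sa y,\xi\da}dy$. A change of variables $\xi=x/(2\sin s)$ in the second integral reformulates the hypotheses as
\[
\int_{\Rm} e^{2a|y|^2}|h(y)|^2\, dy<\infty,\qquad \int_{\Rm} e^{8b\sin^2\! s\,|\xi|^2}|\hat h(\xi)|^2\, d\xi<\infty.
\]
I would then apply the $L^2$ version of Hardy's uncertainty principle for the Fourier transform: if $e^{\alpha|y|^2}h$ and $e^{\beta|\xi|^2}\hat h$ are both in $L^2(\Rm)$ with $4\alpha\beta\ge 1$, then $h\equiv 0$. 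Taking $\alpha=a$, $\beta=4b\sin^2 s$, the threshold $4\alpha\beta\ge 1$ reads exactly $ab\sin^2 s\ge 1/16$; hence $h\equiv 0$, which unwinds to $g\equiv 0$ and therefore $f\equiv 0$.

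The hard part will be justifying the Mehler representation on the full admissible class: the integral formula is classical on the Schwartz class and one must extend it to data in $L^2(\Rm,d\gamma)$ that merely satisfy the Gaussian bounds \eqref{L2}. I expect a density/approximation argument exploiting either the Hermite expansion of $e^{-isH}$ or the rapid decay dictated by \eqref{L2} to suffice. It is worth noting that the inclusion of equality $ab\sin^2 s=1/16$ in the hypothesis costs nothing: the Gaussian extremals $h=ce^{-a|y|^2}$ that realise the Hardy threshold fall outside the class $\{h:e^{a|y|^2}h\in L^2(\Rm)\}$, so the endpoint causes no degeneracy.
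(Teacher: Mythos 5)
Your argument is essentially the paper's own proof: the ground-state conjugation $\psi=e^{-|x|^2/4}\varphi$ is exactly the substitution \eqref{psi}, your Mehler chirp--Fourier--chirp factorisation is the identity \eqref{final} (equivalently \eqref{finalino}), and the conclusion follows from the same $L^2$ Hardy theorem of Cowling--Escauriaza--Kenig--Ponce--Vega with the identical constant bookkeeping $4\cdot a\cdot 4b\sin^2 s\ge 1\Leftrightarrow ab\sin^2 s\ge\frac1{16}$. The extension issue you flag (from Schwartz-type data to general data satisfying \eqref{L2}) is present in the paper's treatment as well and is handled by the same density considerations, so it is not a gap relative to the argument given there.
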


We note explicitly that the assumption $a b \sin^2 s\ge \frac{1}{16}$ automatically excludes the possibility that $s = k \pi$, with $k\in \mathbb Z$. This discrete set of points is where the covariance matrix $Q(t) I_m$, with $Q(t)$ defined in \eqref{Qt0} below, becomes singular and the representation \eqref{OUim} of $e^{itL}$ ceases to be valid. The proof of Theorem \ref{T:main} combines our formula \eqref{final} in Proposition \ref{P:semigroup} with the interesting $L^2$ version of the classical theorem of Hardy  for the Fourier transform due to Cowling, Escauriaza, Kenig, Ponce and Vega in \cite{CEKPV}. 

Theorem \ref{T:main} implies (and is in fact equivalent to) the following uncertainty principle for the Schr\"odinger group $e^{itH}$ associated with the harmonic oscillator $H = \Delta - \frac{|x|^2}4$, \footnote{This operator is usually defined as $H = \Delta - |x|^2$. We are using the $1/4$ normalisation in order not to have to change in $\Delta - 2 \sa v,\nabla\da$ that of the Ornstein-Uhlenbeck operator in the statement of Proposition \ref{P:connect} below.} representing the solution operator for the Cauchy problem
\begin{equation}\label{cp1}
\begin{cases}
\p_t u - i H u = 0,
\\
u(x,0) = u_0(x).
\end{cases}
\end{equation}
Again by Stone's theorem, there exists a strongly-continuous group $e^{i t H}$ of unitary operators in $L^2(\Rm)$. If for $u_0\in L^2(\Rm)$ we let $u(x,t) = e^{itH} u_0(x)$, then we have the following result. 

\begin{corollary}\label{C:main}
Assume that for some $a, b>0$ one has
\begin{equation}\label{L22}
||e^{a|\cdot|^2} u(\cdot,0)||_{L^2(\Rm)} + ||e^{b|\cdot|^2} u(\cdot,s)||_{L^2(\Rm)}<\infty.
\end{equation}
If $a b \sin^2 s  \ge \frac{1}{16}$, then $u\equiv 0$ in $\Rm\times \R$. 
\end{corollary}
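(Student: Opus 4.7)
The plan is to reduce Corollary \ref{C:main} to Theorem \ref{T:main} via the standard unitary equivalence between the harmonic oscillator $H$ on $L^2(\Rm)$ and the Ornstein--Uhlenbeck operator $L$ on $L^2(\Rm, d\gamma)$ implemented by the Gaussian weight $e^{-|x|^2/4}$ (this is presumably the content of the Proposition \ref{P:connect} alluded to in the introduction). First, I would introduce the multiplication operator $U\vf(x) = e^{-|x|^2/4}\vf(x)$, which is a unitary isomorphism $L^2(\Rm, d\gamma) \to L^2(\Rm)$ because $|U\vf(x)|^2 dx = |\vf(x)|^2 d\gamma(x)$. A direct calculation, writing $\vf = e^{|x|^2/4} u$ and expanding $\Delta\vf$ and $\sa x, \nabla\vf\da$, yields the conjugation identity
\[
L\vf = e^{|x|^2/4}\bigl(H u + \tfrac{m}{2} u\bigr),
\]
in which the first-order terms cancel and the additive constant $\tfrac{m}{2}$ arises from $\Delta(e^{|x|^2/4})$. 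Equivalently, $U L U^{-1} = H + \tfrac{m}{2}$.

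By Stone's theorem applied on both sides, this lifts to $U e^{itL} U^{-1} = e^{itm/2}\, e^{itH}$. Consequently, given $u_0 \in L^2(\Rm)$ with $u(x, t) = e^{itH} u_0(x)$, I would set $\vf = e^{|\cdot|^2/4} u_0 \in L^2(\Rm, d\gamma)$ and $f(x,t) = e^{itL}\vf(x)$, so that
\[
u(x, t) = e^{-itm/2}\, e^{-|x|^2/4}\, f(x, t).
\]
Since the scalar factor $e^{-itm/2}$ has modulus one and $e^{-|x|^2/2} dx = d\gamma$, a pointwise computation of $|e^{a|x|^2} u(x,0)|^2$ and $|e^{b|x|^2} u(x,s)|^2$ produces the exact weighted $L^2$-norm identities
\[
\lVert e^{a|\cdot|^2} u(\cdot, 0)\rVert_{L^2(\Rm)} = \lVert e^{a|\cdot|^2}\vf\rVert_{L^2(d\gamma)},\qquad \lVert e^{b|\cdot|^2} u(\cdot, s)\rVert_{L^2(\Rm)} = \lVert e^{b|\cdot|^2} f(\cdot, s)\rVert_{L^2(d\gamma)}.
\]
Hence hypothesis \eqref{L22} is equivalent to \eqref{L2} \emph{with the same constants} $a, b$, and the condition $ab\sin^2 s \ge 1/16$ is preserved verbatim. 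Theorem \ref{T:main} then forces $f \equiv 0$ in $\Rm\times \R$, and the injectivity of $U^{-1}$ gives $u \equiv 0$.

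The argument is essentially bookkeeping, and there is no serious analytic obstacle. The only point warranting care is ensuring that the additive constant $m/2$ appearing in $U L U^{-1}$ contributes \emph{only} a unimodular phase $e^{-itm/2}$ to the relation between $u$ and $f$, so that the exponentially weighted norms translate with no distortion of the parameters $a, b, s$; this is precisely what makes the two uncertainty principles strictly equivalent (as asserted in the paragraph preceding the statement) rather than equivalent up to some rescaling.
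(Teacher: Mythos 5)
Your proof is correct. You reduce Corollary \ref{C:main} to Theorem \ref{T:main} by the unitary conjugation $U\vf = e^{-|x|^2/4}\vf$ from $L^2(\Rm,d\gamma)$ to $L^2(\Rm)$: the identity $ULU^{-1} = H + \tfrac m2$ (your computation of $\Delta(e^{|x|^2/4}u)$ and the cancellation of the drift term check out), the resulting relation $u(x,t) = e^{-itm/2}e^{-|x|^2/4}f(x,t)$ with $f = e^{itL}(e^{|\cdot|^2/4}u_0)$, and the exact translation of the weighted norms, which together make \eqref{L22} literally equivalent to \eqref{L2} with the same $a,b,s$. This is precisely the content of Proposition \ref{P:connect} and of formula \eqref{eitH}, and it is the route the introduction advertises for the equivalence of the two statements. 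The paper's written proof of the corollary is slightly different in presentation: rather than invoking Theorem \ref{T:main} as a black box, it defines $h_t(x) = e^{i\frac{\cot t\,|x|^2}{4}}u_0(x)$, uses the explicit Fourier representation \eqref{finalino} of $e^{itH}$ (itself a consequence of Proposition \ref{P:connect} combined with Proposition \ref{P:semigroup}) to identify $\|e^{b|\cdot|^2}u(\cdot,s)\|_{L^2}$ with a weighted norm of $\hat h_s$, and then applies the $L^2$ Hardy theorem (Theorem \ref{T:har2}) directly. The two arguments are equivalent in substance; yours has the merit of making the ``equivalence'' of Theorem \ref{T:main} and Corollary \ref{C:main} completely transparent, at the cost of taking for granted that the conjugation of generators lifts to the unitary groups, i.e.\ $Ue^{itL}U^{-1} = e^{itm/2}e^{itH}$ --- which is legitimate since $U$ is unitary and both operators are self-adjoint, but deserves the one-line justification via the functional calculus that you essentially give.
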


The passage from Theorem \ref{T:main} to Corollary \ref{C:main} (and vice-versa) is based on Proposition \ref{P:connect} below. We note that combining Proposition \ref{P:semigroup} with the uncertainty principle for the Fourier transform due to Hardy, see \cite{Ha} or also \cite[Theorem 1.2]{CEKPV}, we obtain corresponding $L^\infty$ versions of Theorem \ref{T:main} and Corollary \ref{C:main}. For the Schr\"odinger group $e^{itL}$ we have the following. 

\begin{theorem}\label{T:main2}
Suppose that for some $C, a, b>0$ one has for any $x\in \Rm$
\begin{equation}\label{Linfty}
|e^{-\frac{|x|^2}4}f(x,0)| \le C e^{-a|x|^2},\ \ \ \ \ \  |e^{-\frac{|x|^2}4} f(x,s)| \le  C e^{-b|x|^2}.
\end{equation}
If $a b \sin^2 s  \ge \frac{1}{16}$, then $f\equiv 0$ in $\Rm\times \R$. 
\end{theorem}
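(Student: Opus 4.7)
The plan is to proceed in complete analogy with Theorem \ref{T:main}, replacing the $L^2$ uncertainty principle of Cowling--Escauriaza--Kenig--Ponce--Vega with the classical $L^\infty$ version of Hardy's theorem for the Fourier transform. The starting point is the representation formula \eqref{final} for $e^{isL}\vf$ provided by Proposition \ref{P:semigroup}. Schematically, that formula should express $f(x,s) = e^{isL}\vf(x)$ as an oscillatory Gaussian integral of the form
\[
f(x,s) = c_m(s)\,e^{A(s)|x|^2}\int_{\Rm} e^{i\lambda(s)\sa x,y\da}\,e^{B(s)|y|^2}\,\vf(y)\,dy,
\]
for suitable (complex) coefficients $A(s), B(s), \lambda(s)$ dictated by the Mehler-type kernel of the Ornstein--Uhlenbeck group, with $\lambda(s)$ essentially proportional to $1/\sin s$.

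Multiplying both sides by $e^{-|x|^2/4}$, the precise role of the $1/4$ normalisation in the definition of $H$ is to cancel the leading Gaussian factor in $x$ produced by the kernel, so that, after a harmless absorption into the constant $C$, the hypothesis \eqref{Linfty} on $f(\cdot,s)$ rewrites as a pointwise Gaussian bound
\[
\left|\,\int_{\Rm} e^{i\lambda(s)\sa x,y\da}\, g(y)\, dy\,\right|\le C\,e^{-B(s)|x|^2},
\]
where $g(y) := e^{B(s)|y|^2}\vf(y)$. At the same time, the hypothesis on $f(\cdot,0) = \vf$, combined with the corresponding cancellation at $s=0$, says that $|g(y)|\le C e^{-A|y|^2}$ for a suitable $A$ proportional to $a$. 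In other words, we have reduced the problem to a pointwise Gaussian decay statement on $g$ and on its Fourier transform (evaluated at the rescaled variable $\lambda(s)x$).

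At this point I would invoke Hardy's original $L^\infty$ uncertainty principle for the Fourier transform (see \cite[Theorem 1.2]{CEKPV}): if $|g|$ and $|\widehat g|$ are bounded by Gaussians with product of exponents exceeding the critical threshold, then $g\equiv 0$. The change of variables $\xi = \lambda(s)x$ introduces precisely the factor $1/\sin^2 s$ in the product of exponents; after the bookkeeping, the hypothesis $ab\sin^2 s \ge 1/16$ of Theorem \ref{T:main2} translates into exactly the critical Hardy condition. Consequently $g\equiv 0$, hence $\vf\equiv 0$ and, by unitarity of $e^{itL}$, $f\equiv 0$ on $\Rm\times \R$.

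The main technical obstacle I expect is the bookkeeping of the coefficients $A(s), B(s), \lambda(s)$ produced by the formula \eqref{final}, and in particular the verification that the Gaussian weight $e^{-|x|^2/4}$ appearing in \eqref{Linfty} is exactly the right one to cancel the growing Gaussian part of the Schrödinger--Mehler kernel. Once this cancellation is established, the argument becomes essentially the same as in the proof of Theorem \ref{T:main}, with the only difference being the use of the $L^\infty$ rather than the $L^2$ form of Hardy's theorem.
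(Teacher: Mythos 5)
Your proposal follows essentially the same route the paper intends: with $\psi=e^{-|\cdot|^2/4}\vf$ and $h_s=e^{i\frac{\cot s\,|\cdot|^2}{4}}\psi$, the first bound in \eqref{Linfty} gives $|h_s(x)|=|e^{-|x|^2/4}f(x,0)|\le Ce^{-a|x|^2}$, while \eqref{final} gives $|\hat h_s(x/(4\pi\sin s))|=(4\pi)^{m/2}|\sin s|^{m/2}\,|e^{-|x|^2/4}f(x,s)|$, hence $|\hat h_s(\xi)|\le C'e^{-16\pi^2 b\sin^2 s\,|\xi|^2}$; Hardy's $L^\infty$ theorem with exponents $a$ and $16\pi^2 b\sin^2 s$ then does exactly the bookkeeping you describe, and the weight $e^{-|x|^2/4}$ is indeed precisely what cancels the Gaussian growth of the Mehler--Schr\"odinger kernel. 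Your identifications of $g$, $A(s)$, $B(s)$, $\lambda(s)$ are all correct.

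The one genuine gap is the endpoint $ab\sin^2 s=\tfrac1{16}$, which the statement includes. Unlike the $L^2$ result of \cite{CEKPV} used for Theorem \ref{T:main} (where $e^{a|x|^2}h\in L^2$ rules out the Gaussian even at equality), Hardy's pointwise theorem at the critical product $\pi^2$ concludes only that $h_s(x)=ce^{-a|x|^2}$, not $h_s\equiv 0$. And this case genuinely occurs: the nonzero datum $\vf(x)=e^{(\frac14-a-\frac{i\cot s}{4})|x|^2}$ has $h_s(x)=e^{-a|x|^2}$, and \eqref{final} shows it satisfies both bounds in \eqref{Linfty} with $b=1/(16a\sin^2 s)$, i.e.\ exactly at equality. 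So your argument proves the conclusion only under the strict inequality $ab\sin^2 s>\tfrac1{16}$; at equality the correct conclusion is that $\vf$ is a complex Gaussian of this form, and no additional argument can upgrade that to $f\equiv 0$. You should either restrict to strict inequality or state the Gaussian alternative at the endpoint.
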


We mention that, for the harmonic oscillator $H$, an $L^\infty$ uncertainty principle such as Theorem \ref{T:main2}, in which the Gaussian function $e^{-\frac{|x|^2}4}$ does not appear in the hypothesis \eqref{Linfty}, was found in Theorem 6.2 of the interesting paper \cite{Vel}. Uncertainty inequalities is a vast subject and there exist many beautiful and important results scattered in the literature which is impossible to quote here. We refer the reader to \cite{Folland} and \cite{Veluma} for an interesting account which covers up to 2004. 

In closing, we state an optimal dispersive estimate that we obtain combining Proposition \ref{P:semigroup} with Beckner's deep improvement of the Hausdorff-Young inequality, see \cite{Be}. For the definition of the class $\mathscr K(\Rm)$ see \eqref{K} below.
\begin{proposition}\label{P:disp}
Let $\vf\in \mathscr K(\Rm)$, and $f(x,t) = e^{itL}\vf(x)$ with $t \not= k \pi$, with $k\in \mathbb Z$. For any $1\le p\le 2$ one has
\begin{equation}\label{disp}
||e^{-\frac{|\cdot|^2}4} f(\cdot,t)||_{L^{p'}(\Rm)} \le \left(\frac{p^{1/p}}{{p'}^{1/p'}}\right)^{\frac m2} \ \frac{1}{(4\pi |\sin t|)^{m(\frac 12 - \frac 1{p'})}}\ ||e^{-\frac{|\cdot|^2}4} \vf||_{L^p(\Rm)},
\end{equation}
where $1/p + 1/{p'} = 1$. The estimate \eqref{disp} is optimal, in the sense that it cannot possibly hold in the range $2<p\le \infty$. Moreover, equality is attained in \eqref{disp} by initial data $\vf(x) =  e^{-\alpha |x|^2+ \frac{|x|^2}4}$, $\Re \alpha>0$.  
\end{proposition}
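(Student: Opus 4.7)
The plan is to reduce the dispersive estimate \eqref{disp} to Beckner's sharp Hausdorff-Young inequality \cite{Be} via the explicit kernel representation \eqref{final} of Proposition \ref{P:semigroup}. The central observation is that multiplication by $e^{-|x|^2/4}$ converts the complex Mehler-type kernel of \eqref{final} into, up to unimodular Gaussian chirp factors, a rescaled Euclidean Fourier transform of $e^{-|\cdot|^2/4}\vf$; Beckner's sharp inequality then supplies the optimal constant directly.

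To carry this out, I would first use \eqref{final} to write $f(x,t) = e^{itL}\vf(x)$ as a complex Gaussian integral whose prefactor and quadratic exponent are governed by $1 - e^{-2it} = -2ie^{-it}\sin t$, which is the algebraic origin of both the singular set $\{t = k\pi\}$ and the scaling $|\sin t|^{-m/2}$. After multiplying by $e^{-|x|^2/4}$ and completing the square in the complex quadratic exponent, one expects the quadratic-in-$x$ contribution to collapse into a unimodular chirp $e^{i\Phi(x,t)}$, the background Gaussian in $y$ to combine with $d\gamma(y) = e^{-|y|^2/2}dy$ to produce $e^{-|y|^2/4}\vf(y)$ dressed by a second unimodular chirp $e^{i\Psi(y,t)}$, and the cross term to reduce to a pure oscillation $e^{-icx\cdot y/\sin t}$. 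This yields an identity of the schematic form
\[
e^{-|x|^2/4}f(x,t) \;=\; \frac{e^{i\Phi(x,t)}}{(4\pi i\sin t)^{m/2}}\,\widehat{h}\!\left(\tfrac{x}{4\pi\sin t}\right), \qquad h(y) := e^{i\Psi(y,t)}e^{-|y|^2/4}\vf(y),
\]
with $\widehat{h}(\xi) = \int_{\Rm} h(y)\,e^{-2\pi i y\cdot\xi}\,dy$. Taking $L^{p'}$-norms, the unimodular factor $e^{i\Phi}$ drops out, and the change of variable $\xi = x/(4\pi\sin t)$ produces a Jacobian $(4\pi|\sin t|)^{m/p'}$ that, combined with the $(4\pi|\sin t|)^{-m/2}$ prefactor, gives exactly the factor $(4\pi|\sin t|)^{-m(1/2 - 1/p')}$ appearing in \eqref{disp}. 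Beckner's sharp inequality
\[
\|\widehat{h}\|_{L^{p'}(\Rm)} \le \left(\frac{p^{1/p}}{{p'}^{1/p'}}\right)^{m/2}\|h\|_{L^p(\Rm)}, \qquad 1\le p\le 2,
\]
together with the pointwise identity $|h(y)| = |e^{-|y|^2/4}\vf(y)|$, then yields \eqref{disp} with the claimed constant.

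For the two optimality statements, I would invoke the corresponding sharpness properties of Beckner's inequality. Its extremizers are exactly the complex Gaussians $h(y) = e^{-\alpha|y|^2}$ with $\Re\alpha>0$; reversing $h(y) = e^{i\Psi(y,t)}e^{-|y|^2/4}\vf(y)$ and absorbing the quadratic phase $\Psi$ into a redefinition of $\alpha$ produces exactly the extremal initial data $\vf(y) = e^{-\alpha|y|^2 + |y|^2/4}$. The impossibility of \eqref{disp} in the range $2<p\le\infty$ transfers from the failure of the Hausdorff-Young inequality in the same range: the displayed identity shows that any such estimate would force an $L^p \to L^{p'}$ bound on the Euclidean Fourier transform with $p > 2$, contradicted by standard dilation arguments against characteristic functions of balls.

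The main obstacle is the first step --- verifying that \eqref{final}, when multiplied by $e^{-|x|^2/4}$, decomposes as (chirp in $x$)\,$\times$\,(Fourier oscillation)\,$\times$\,(chirp in $y$ times $e^{-|y|^2/4}\vf(y)$). This is a careful completion of squares in a complex Gaussian exponent, relying on the exact cancellation between the real weight $e^{-|x|^2/4}$ and the real part of the Mehler exponent, which in turn is governed by $1 - e^{-2it} = -2ie^{-it}\sin t$. Once this decomposition is established, the rest is a change of variables and a direct application of Beckner.
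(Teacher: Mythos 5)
Your proposal is correct and takes essentially the same route as the paper, which simply remarks that the result ``follows directly from \eqref{final} and from Beckner's sharp version of the Hausdorff-Young theorem'' and leaves the details to the reader; in particular, the decomposition you single out as the ``main obstacle'' is not an obstacle at all, since it is literally the identity \eqref{final} of Proposition \ref{P:semigroup}, with unimodular factors $e^{i\Phi(x,t)}=e^{\frac{imt}{2}-\frac{i\pi m}{4}+ i\frac{\cot t\,|x|^2}{4}}$ and $e^{i\Psi(y,t)}=e^{i\frac{\cot t\,|y|^2}{4}}$, so you may quote it and pass directly to the change of variables and Beckner. One caveat on the equality claim (which applies to the paper's own statement as well): for $1<p<2$ the extremizers of the sharp Hausdorff-Young inequality are, by Lieb's theorem, Gaussians whose quadratic part has \emph{real} positive-definite coefficient --- a direct computation with $h(y)=e^{-\beta|y|^2}$ shows the ratio $\|\hat h\|_{p'}/\|h\|_p$ falls short of the Beckner constant by the factor $\bigl(\Re\beta/|\beta|\bigr)^{\frac m2(\frac1p-\frac1{p'})}<1$ when $\Im\beta\neq0$ --- so equality in \eqref{disp} for $\vf(y)=e^{-\alpha|y|^2+\frac{|y|^2}{4}}$ actually requires $e^{i\frac{\cot t\,|y|^2}{4}}e^{-\alpha|y|^2}$ to be a real Gaussian, i.e.\ $\Im\alpha=\frac{\cot t}{4}$, rather than holding for every $\alpha$ with $\Re\alpha>0$.
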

Note that as $t\to 0$ the estimate \eqref{disp} displays the same behaviour $t^{-m(\frac 12 - \frac{1}{p'})}$ as that of the free Schr\"odinger group $e^{i t \Delta}$ (for the latter, see e.g. \cite[Lemma 1.2]{GV}). 
It may be of interest to compare \eqref{disp} with the following dispersive estimate for the Schr\"odinger equation with friction
\begin{equation}\label{OUrd}
\p_t v - i \Delta v + \sa x,\nabla v\da = 0.
\end{equation}
The PDE \eqref{OUrd} is very different from \eqref{cp0}, as one can readily surmise from its invariance group of (real) left-translations 
\[
\sigma_{(x,s)} (y,t) = (x,s)\circ (y,t) = (y + e^{t}x,t+s).
\] 
As a special case of the result in \cite[Theorem 4.1]{GL}, one obtains for the semigroup $\{\mathcal T(t)\}_{t\ge 0}$ associated with the Cauchy problem for \eqref{OUrd}
\begin{equation}\label{feOU}
||\mathcal T(t) \vf||_{L^{p'}(\Rm)} \le C(m,p)\ \frac{e^{\frac{m}{p'} t}}{(1-e^{-2t})^{m(\frac 12 - \frac 1{p'})}} ||\vf||_{L^p(\Rm)}.
\end{equation}
The behaviour as $t\to 0^+$ in \eqref{disp} and \eqref{feOU} is the same, but because of the presence of $e^{-\frac{|\cdot|^2}4}$ in the former, the two estimates are incomparable.

\section{The imaginary Ornstein-Uhlenbeck}\label{S:senzadrift}

In this section we solve the Cauchy problem \eqref{cp0} by constructing a representation of the solution operator. Our main result is \eqref{final} in Proposition \ref{P:semigroup}. It rests on Proposition \ref{P:OUim}, which could be derived from the physicists' Wick rotation, see \cite[Section 3]{Wi}, in the Mehler formula for the harmonic oscillator $H = - \Delta + \frac{|x|^2}4$, see e.g. p.154 in \cite{BGV} or p.55 in \cite{Folland}. Such derivation however requires justifying some nontrivial passages. Our elementary construction makes the solution operator \eqref{OUim}  immediately available independently from the harmonic oscillator and also leads to directly unveil the basic identity \eqref{final}.  

In what follows, given a function $\vf$ we indicate with $\psi$ the function
\begin{equation}\label{psi}
\psi(x) = e^{- \frac{|x|^2}{4}} \vf(x),\ \ \ \ \ \ x\in \Rm.
\end{equation} 
Notice from \eqref{psi} that $\psi\in L^2(\Rm) \Longleftrightarrow \vf\in L^2(\Rm,d\gamma)$, and
\begin{equation}\label{phipsi}
||\psi||_{L^2(\Rm)} = ||\vf||_{L^2(\Rm,d\gamma)}.
\end{equation}
We denote by 
\begin{equation}\label{K}
\mathscr K(\Rm) = \{\vf\in C^\infty(\Rm)\mid \psi \in \mathscr S(\Rm)\}.
\end{equation}
It is clear that $\mathscr K(\Rm)$ is dense in $L^2(\Rm,d\gamma)$.

\begin{remark}\label{R:stable}
It follows from Proposition \ref{P:semigroup} below that
\begin{equation}\label{nice}
e^{i t L} : \mathscr K(\Rm)\ \longrightarrow\ \mathscr K(\Rm).
\end{equation}
\end{remark}
Throughout this note we let $J^+ = (0,\pi)$, $J^- = (\pi,2\pi)$, and denote 
\[
J = J^+\cup J^-. 
\]
The reader should keep in mind that the covariance matrix $Q(t) = e^{-it} \sin t\ I_m$ defined by  \eqref{Qt0} below is invertible for any $t\in J$. Since such matrix can be extended by periodicity to the whole of $\R\setminus \pi \mathbb Z$, we will confine the attention to the set $J$.

\begin{proposition}\label{P:OUim}
Let $\vf\in \mathscr K(\Rm)$. For every $x\in \Rm$ the function  
\begin{equation}\label{OUim}
f(x,t) = \begin{cases}
\frac{(4\pi
)^{-\frac{m}{2}} e^{\frac{imt}2}}{e^{\frac{i\pi m}{4}} (\sin t)^{\frac m2}} \int_{\Rm} e^{i \frac{|e^{it/2} y-e^{-it/2} x|^2}{4 \sin t}} \vf(y) dy,\ \ \ \ \ \ \ t\in J^+,
\\
\\
\frac{(4\pi
)^{-\frac{m}{2}} e^{\frac{imt}2}}{e^{\frac{i 3\pi m}{4}} |\sin t|^{\frac m2}} \int_{\Rm} e^{-i \frac{|e^{it/2} y-e^{-it/2} x|^2}{4 |\sin t|}} \vf(y) dy,\ \ \ \ \ \ \ t\in J^-,
\end{cases}
\end{equation} 
and $f(x,0) = \vf(x)$, solves \eqref{cp0} in $\Rm\times J$.
\end{proposition}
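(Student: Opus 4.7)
The strategy is direct verification on $J^+$ (where $\sin t>0$); the $J^-$ case is handled by the same computation, with the sign change of $\sin t$ and the corresponding branch choice for $(\sin t)^{m/2}$ absorbed in the replacement of $e^{-i\pi m/4}$ by $e^{-3i\pi m/4}$ and the sign flip in the oscillatory exponent. Interpreting $|e^{it/2}y-e^{-it/2}x|^2$ as the algebraic (non-Hermitian) sum $\sum_{j=1}^m (e^{it/2}y_j-e^{-it/2}x_j)^2=e^{it}|y|^2+e^{-it}|x|^2-2\langle x,y\rangle$, the exponent of the kernel splits into
\[
E(x,y,t):=\frac{i\,|e^{it/2}y-e^{-it/2}x|^2}{4\sin t}=\frac{|x|^2-|y|^2}{4}+i\,\frac{\cos t\,(|x|^2+|y|^2)-2\langle x,y\rangle}{4\sin t},
\]
so in particular $|e^{E(x,y,t)}|=e^{(|x|^2-|y|^2)/4}$, a crucial observation for the integrability below. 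Writing $K(x,y,t)=C(t)e^{E(x,y,t)}$ with $C(t)=(4\pi)^{-m/2}e^{-i\pi m/4}e^{imt/2}(\sin t)^{-m/2}$, the proof has three steps: the pointwise identity $(\partial_t-iL_x)K=0$; the interchange of differentiation and integration; and the initial condition as $t\to 0^+$.

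For the pointwise identity, $E$ is a quadratic polynomial in $x$ with explicit trigonometric coefficients, so $\partial_t E$, $\nabla_x E$, $\Delta_x E$, $\langle x,\nabla_x E\rangle$ are elementary, and after dividing by $CK$ the required identity
\[
\frac{C'(t)}{C(t)}+\partial_t E \;=\; i\bigl(\Delta_x E+|\nabla_x E|^2-\langle x,\nabla_x E\rangle\bigr)
\]
reduces to matching the coefficients of $1$, $|x|^2$, $|y|^2$ and $\langle x,y\rangle$ separately. This uses only the identities $\sin t+i\cos t=ie^{-it}$ and $2\sin^2 t+\cos 2t=1$. For the interchange, the hypothesis $\varphi\in\mathscr{K}(\Rm)$ means $\varphi(y)=e^{|y|^2/4}\psi(y)$ for some $\psi\in\mathscr{S}(\Rm)$; the integrand is then dominated by $|C(t)|\,e^{|x|^2/4}|\psi(y)|$, uniformly on compact subsets of $\Rm\times J^+$, and any $x$- or $t$-derivative adds at most polynomial-in-$(x,y)$ corrections absorbed by the Schwartz decay of $\psi$. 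Hence $f\in C^\infty(\Rm\times J^+)$ and $(\partial_t-iL)f=0$ throughout $\Rm\times J^+$.

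The main obstacle is the initial condition $\lim_{t\to 0^+}f(x,t)=\varphi(x)$. Using $(1-\cos t)/\sin t=\tan(t/2)$, the imaginary phase rewrites as $|x-y|^2/(4\sin t)-\tan(t/2)(|x|^2+|y|^2)/4$, while $C(t)=(4\pi it)^{-m/2}(1+O(t))$. Combining, $K(x,y,t)$ approaches $(4\pi it)^{-m/2}\,e^{i|x-y|^2/(4t)}\,e^{(|x|^2-|y|^2)/4}$ uniformly for $y$ in compacts. After substituting $\varphi=e^{|\cdot|^2/4}\psi$, the integral reduces asymptotically to $e^{|x|^2/4}$ times a free Schr\"odinger evolution of $\psi\in\mathscr{S}$ at time $t$, which converges to $e^{|x|^2/4}\psi(x)=\varphi(x)$ by the $\mathscr{S}$-continuity of that group. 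Making this rigorous for large $|y|$, where the kernel is only oscillatory, is the principal technical point; I would do so by completing the square in $y$ in the imaginary phase and changing variables, so as to express the integral exactly as a rescaled free-Schr\"odinger convolution with $\psi$, and then invoking the standard small-time limit on $\mathscr{S}$.
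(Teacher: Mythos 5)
Your verification is correct, but it runs in the opposite direction from the paper's argument. The paper \emph{derives} \eqref{OUim}: it conjugates by the complex flow, setting $v(x,t)=f(e^{it}x,t)$, which turns \eqref{cp0} into the variable-coefficient problem $\p_t v = iQ'(t)\Delta v$ with $Q(t)=e^{-it}\sin t$ as in \eqref{Qt0}, solves this on the Fourier side as $\hat v(\xi,t)=\hat\vf(\xi)e^{-4\pi^2 iQ(t)|\xi|^2}$, and then inverts the complex Gaussian using H\"ormander's formula \eqref{gengaussi2}, which in particular dictates the branch constants $e^{-i\pi m/4}$ on $J^+$ and $e^{-3i\pi m/4}$ on $J^-$ through the analytic branch of $\sqrt{\det A}$. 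You instead \emph{verify} the stated kernel directly, and your computation is sound: the splitting of the exponent via \eqref{expa} giving $|e^{E}|=e^{(|x|^2-|y|^2)/4}$ is exactly what makes the integral converge for $\vf\in\K(\Rm)$ and justifies differentiating under the integral sign; the coefficient-matching identity for $1$, $|x|^2$, $|y|^2$, $\sa x,y\da$ does close (I checked it), using $C'/C=\tfrac m2(i-\cot t)=-\tfrac m2 e^{-it}/\sin t$; and your plan for the initial datum (rewrite the phase as $|x-y|^2/(4\sin t)-\tan(t/2)(|x|^2+|y|^2)/4$, recognise a free Schr\"odinger evolution at time $\sin t$ of $e^{-i\tan(t/2)|\cdot|^2/4}\psi$, and use small-time continuity on $\mathscr S(\Rm)$) is a workable way to make the limit rigorous — arguably more explicit than the paper, whose Fourier-side derivation handles the initial condition only implicitly. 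The trade-off is on $J^-$: a pointwise verification of the PDE cannot by itself certify the constant $e^{-3i\pi m/4}$, since on $J^-$ any scalar multiple of a solution is again a solution and there is no connection back to the datum at $t=0$ across the singularity at $t=\pi$; the paper's derivation (or an analytic-continuation argument in $t$) is what pins that branch down as the one belonging to the unitary group $e^{itL}$. For the proposition as literally stated this is a minor caveat, but it is worth acknowledging if the $J^-$ formula is to be used as the actual propagator.
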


\begin{proof}

We begin with a simple, but critical observation. Suppose that $v$ and $f$ are connected by the relation
\begin{equation}\label{drift0}
v(x,t) = f(e^{i t} x,t).
\end{equation}
Then, $f$ is a solution of the Cauchy problem \eqref{cp0} if and only if $v$ solves the problem
\begin{equation}\label{cpGsenzadrift00}
\begin{cases}
\p_t v - i Q'(t) \Delta v = 0,
\\
v(x,0) = \vf(x),
\end{cases}
\end{equation} 
where we have let
\begin{equation}\label{Qt0}
Q(t) =  \int_0^t e^{-2is} ds = \frac{1-e^{-2it}}{2i} = e^{-it} \frac{e^{it}-e^{-it}}{2i} = e^{-it} \sin t.
\end{equation}
To prove that $v$ solves \eqref{cpGsenzadrift00}, we argue as follows. The chain rule gives from \eqref{drift0}
\[
v_t(x,t) = i e^{it} \sa x,\nabla f(e^{it} x,t)\da + f_t(e^{it} x,t).
\]
On the other hand, the PDE in \eqref{cp0} gives
\[
f_t(e^{it} x,t) = i \Delta f(e^{it} x,t) - i e^{it} \sa x,\nabla f(e^{it} x,t)\da.
\]
Combining the latter two equations, we infer that $v$ solves 
\[
v_t(x,t) = i \Delta f(e^{it} x,t).
\]
Next, differentiating \eqref{drift0} we find
\[
\Delta v(x,t) = e^{2it} \Delta f(e^{it} x,t).
\]
We thus conclude that 
\[
v_t(x,t) = i e^{-2it} \Delta v(x,t) = i Q'(t) \Delta v(x,t),
\]
where in the second equality we have used \eqref{Qt0}.
Summarising, the function $v$ solves the problem \eqref{cpGsenzadrift00}.
To find a representation formula for the latter, we use the Fourier transform.
Supposing that $v$ be a solution, we define
\begin{equation}\label{pFTv}
\hat v(\xi,t) = \int_{\Rm} e^{-2\pi i\sa \xi,x\da} v(x,t) dx.
\end{equation}
Then \eqref{cpGsenzadrift00} is transformed into
\begin{equation}\label{cpGsenzadrift0}
\begin{cases}
\p_t \hat v + 4\pi^2 i Q'(t)|\xi|^2 \hat v = 0,
\\
\hat v(\xi,0) = \hat \vf(\xi),
\end{cases}
\end{equation} 
whose solution is given by
\begin{equation}\label{hatv}
\hat v(\xi,t) = \hat \vf(\xi) e^{-4\pi^2 i Q(t) |\xi|^2}.
\end{equation} 
Note that, with $Q(t)$ as in \eqref{Qt0}, for every $t\in J$ the matrix $Q(t) I_m$ is invertible. Moreover, we have
\begin{equation}\label{iQ}
i Q(t) I_m = i(\cos t - i \sin t) \sin t I_m = \sin^2 t\ I_m + i \frac{\sin{2t}}2 I_m. 
\end{equation}
We now invoke \cite[Theorem 7.6.1]{Hobook}, which we formulate as follows: Let $A\in G\ell(\mathbb C,m)$ be such that $A^\star = A$ and $\Re A \ge 0$. Then 
\begin{equation}\label{gengaussi2}
\mathscr F\left(\frac{(4\pi)^{-\frac{m}{2}}}{\sqrt{\operatorname{det} A}} e^{- \frac{\sa A^{-1}\cdot,\cdot\da}{4}}\right)(\xi) =
e^{- 4 \pi^2  \sa A\xi,\xi\da},
\end{equation}
where $\sqrt{\operatorname{det} A}$ is the unique analytic branch such that $\sqrt{\operatorname{det} A}>0$ when $A$ is real. 
If in \eqref{gengaussi2} we take $A = i Q(t) I_m =  i e^{-it} \sin t\ I_m$ with  $t\in J^+$, then 
\[
A^{-1} = - i  \frac{e^{it}}{\sin t} I_m,
\]
and $\Re A = \sin^2 t\ I_m \ge 0$. We thus find 
\begin{equation}\label{larsetto2}
e^{-4\pi^2 i Q(t) |\xi|^2}  = \mathscr F\left(\frac{e^{\frac{imt}2}}{e^{\frac{i\pi m}{4}} (\sin t)^{\frac m2}}(4\pi
)^{-\frac{m}{2}} e^{i e^{it}\frac{|\cdot|^2}{4 \sin t}}\right)(\xi).
\end{equation}
From \eqref{hatv} and \eqref{larsetto2} we conclude that for every $x\in \Rm$ and $t\in J^+$
\begin{equation}\label{hatv2}
v(x,t) = \frac{e^{\frac{imt}2}}{e^{\frac{i\pi m}{4}} (\sin t)^{\frac m2}}(4\pi
)^{-\frac{m}{2}} \int_{\Rm} e^{i e^{it}\frac{|y-x|^2}{4 \sin t}} \vf(y) dy.
\end{equation} 
Finally, keeping \eqref{drift0} in mind, after some elementary algebraic manipulations, we obtain the representation \eqref{OUim} when $t\in J^+$. The part corresponding to $t\in J^-$ follows by elementary changes if one observes that now $A = e^{i\frac{3\pi}2} e^{-it} |\sin t| I_m$. 

\end{proof}

\begin{remark}
It may be of interest to compare \eqref{OUim} with the well-known representation\footnote{Formula   \eqref{gustavo} is classical. One way to easily obtain it is by taking $A = I_m, B = - 2 \sqrt \omega I_m, c = 0$ in (1.2) in the opening of \cite{Ho}.} 
\begin{align}\label{gustavo}
u(x,t) & = (4\pi)^{- \frac m2} e^{m t \sqrt \omega} \left(\frac{2\sqrt \omega}{\sinh(2t\sqrt \omega)}\right)^{\frac m2}
\\
& \times \int_{\Rm} \exp\left( -  \frac{\sqrt \omega}{2 \sinh(2t\sqrt \omega)} |e^{t\sqrt \omega} y - e^{-t\sqrt \omega} x|^2\right) \vf(y) dy
\notag
\end{align}
of the solution of the Cauchy problem for the Ornstein-Uhlenbeck operator
\begin{equation}\label{cpou}
\begin{cases}
u_t - \Delta u + 2 \sqrt \omega \langle x,\nabla u\rangle  = 0,\ \ \ \ \ \omega>0,
\\
u(x,0) = \vf(x).
\end{cases}
\end{equation}
 If one takes $\omega = \frac 14$, and keeping in mind that $\sinh it = i \sin t$, then it is clear that \emph{formally} substituting $t\to it$ in \eqref{gustavo}, one obtains the case $t\in J^+$ of \eqref{OUim}.
\end{remark}

In what follows we assume without restriction that $t\in J^+$. To further unravel \eqref{OUim}, and also to better clarify the role of the class $\mathscr K(\Rm)$ in \eqref{K}, note that expanding
\begin{equation}\label{expa}
\frac{|e^{it/2}y-e^{-it/2} x|^2}{4 \sin t} = \frac{e^{it}|y|^2 + e^{-it}|x|^2 - 2\sa x,y\da}{4 \sin t},
\end{equation}
we find 
\begin{equation}\label{OUim4}
f(x,t) = \frac{(4\pi
)^{-\frac{m}{2}}e^{\frac{imt}2}}{e^{\frac{i\pi m}{4}} (\sin t)^{\frac m2}} \int_{\Rm} e^{i \frac{e^{it}|y|^2 + e^{-it}|x|^2 - 2\sa x,y\da}{4 \sin t}} \vf(y) dy.
\end{equation}
The change of variable $y = 4 \pi \sin t\ z$ in the integral in \eqref{OUim4} gives
\begin{align*}
& \int_{\Rm} e^{- i \frac{\sa x,y\da}{2 \sin t}} e^{i \frac{e^{it}|y|^2 + e^{-it}|x|^2}{4 \sin t}} \vf(y) dy = (4 \pi \sin t)^m e^{i \frac{e^{-it}|x|^2}{4 \sin t}} \int_{\Rm} e^{- 2 \pi i \sa x,z\da} e^{i \frac{e^{it}  |4 \pi \sin t z|^2}{4 \sin t}} \vf(4 \pi \sin t z) dz
\\
& = (4 \pi \sin t)^m e^{\frac{|x|^2}{4}} e^{i \frac{\cot t |x|^2}{4}} \int_{\Rm} e^{- 2 \pi i \sa x,z\da} e^{i \frac{\cot t |4 \pi \sin t z|^2}{4}} e^{- \frac{|4 \pi \sin t z|^2}{4}} \vf(4 \pi \sin t z) dz.
\end{align*}
Keeping \eqref{psi} in mind, 
we can rewrite the above integral as 
\begin{align*}
& \int_{\Rm} e^{- i \frac{\sa x,y\da}{2 \sin t}} e^{i \frac{e^{it}|y|^2 + e^{-it}|x|^2}{4 \sin t}} \vf(y) dy = (4 \pi \sin t)^m e^{\frac{|x|^2}{4}} e^{i \frac{\cot t |x|^2}{4}} \mathscr F\left(\delta_{4 \pi \sin t}\ e^{i \frac{\cot t |\cdot|^2}4} \psi\right)(x)
\\
& = e^{\frac{|x|^2}{4}} e^{i \frac{\cot t |x|^2}{4}} \mathscr F\left(e^{i \frac{\cot t |\cdot|^2}4} \psi\right)(\frac{x}{4 \pi \sin t}),
\end{align*}
where we have denoted by $\delta_\la f(x) = f(\la x)$ the action of the dilation operator on a function $f$.
Going back to \eqref{OUim4}, we have finally established the  following basic result.

\begin{proposition}\label{P:semigroup}
Let $\vf\in \mathscr K(\Rm)$. Then for every $t\in J^+$ the function $f(x,t) = e^{i t L} \vf(x)$ is given by the formula
\begin{equation}\label{final}
e^{- \frac{|x|^2}{4}} f(x,t) =  (4\pi
)^{-\frac{m}{2}}\frac{e^{\frac{imt}2}}{e^{\frac{i\pi m}{4}} (\sin t)^{\frac m2}} e^{i \frac{\cot t |x|^2}{4}} \mathscr F\left(e^{i \frac{\cot t |\cdot|^2}4} \psi\right)(\frac{x}{4 \pi \sin t}),
\end{equation}
where $\psi$ is defined by \eqref{psi}.
\end{proposition}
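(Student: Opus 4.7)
The plan is to derive \eqref{final} directly from the representation \eqref{OUim} of Proposition \ref{P:OUim}, which is really the heart of the matter. Since the integral kernel is explicit, the argument is purely computational: expand the quadratic form in the exponent, rescale the integration variable to surface a Fourier transform, and split the $x$-dependent phase to cancel the unwanted Gaussian growth against the factor $e^{-|x|^2/4}$ on the left-hand side.

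I would restrict without loss of generality to $t \in J^+$, where $\sin t > 0$. First, I expand $|e^{it/2}y - e^{-it/2}x|^2 = e^{it}|y|^2 + e^{-it}|x|^2 - 2\langle x,y\rangle$ as in \eqref{expa}, putting \eqref{OUim} into the form \eqref{OUim4}. Second, I perform the dilation $y = 4\pi \sin t\, z$: the Jacobian is $(4\pi \sin t)^m$ and the mixed term $-\langle x,y\rangle/(2\sin t)$ collapses into the Fourier kernel $-2\pi \langle x,z\rangle$.

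Third, I decompose $e^{\pm it} = \cos t \pm i \sin t$, which splits the two remaining phases as
\[
e^{i\frac{e^{-it}|x|^2}{4\sin t}} = e^{\frac{|x|^2}4}\, e^{i\frac{\cot t\,|x|^2}4}, \qquad e^{i\frac{e^{it}|4\pi \sin t\,z|^2}{4\sin t}} = e^{i\frac{\cot t\,|4\pi\sin t\,z|^2}4}\, e^{-\frac{|4\pi\sin t\,z|^2}4}.
\]
Multiplying both sides of \eqref{OUim4} by $e^{-|x|^2/4}$ cancels the first Gaussian and brings the factor $e^{i\cot t\,|x|^2/4}$ outside the integral. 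Combining the second Gaussian with $\vf(4\pi\sin t\,z)$ and invoking \eqref{psi} rewrites the integrand as $e^{-2\pi i \langle x,z\rangle} \bigl(e^{i\cot t\,|\cdot|^2/4}\psi\bigr)(4\pi \sin t\,z)$. Finally, the standard dilation identity for the Fourier transform,
\[
\int_{\Rm} e^{-2\pi i \langle x,z\rangle} g(\lambda z)\, dz = \lambda^{-m}\, \hat g(x/\lambda), \qquad \lambda = 4\pi \sin t,
\]
exactly cancels the Jacobian and produces $\mathscr F(e^{i\cot t\,|\cdot|^2/4}\psi)(x/(4\pi \sin t))$, which is the right-hand side of \eqref{final}.

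The hypothesis $\vf \in \mathscr K(\Rm)$ ensures that $\psi = e^{-|\cdot|^2/4}\vf \in \mathscr S(\Rm)$, which both legitimizes the Fourier operations above and, as a byproduct, yields the invariance \eqref{nice} once one observes that the right-hand side of \eqref{final} again lies in $\mathscr S(\Rm)$ as a function of $x$. The only genuine obstacle is the bookkeeping of the complex prefactor $(4\pi)^{-m/2} e^{imt/2}/(e^{i\pi m/4}(\sin t)^{m/2})$, but this prefactor is inherited verbatim from \eqref{OUim4} (which traces back to applying \eqref{gengaussi2} to $A = iQ(t)I_m = ie^{-it}\sin t\,I_m$ in \eqref{iQ}) and is unaffected by the substitution $y = 4\pi \sin t\,z$, so no additional phase has to be tracked.
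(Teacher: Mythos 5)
Your proposal is correct and follows essentially the same route as the paper: starting from \eqref{OUim}, expanding the quadratic phase as in \eqref{expa}, substituting $y=4\pi\sin t\,z$, splitting $e^{\pm it}=\cos t\pm i\sin t$ to produce the factors $e^{|x|^2/4}e^{i\cot t|x|^2/4}$ and $e^{i\cot t|y|^2/4}e^{-|y|^2/4}$, and absorbing the latter Gaussian into $\vf$ via \eqref{psi} so that the dilation identity for $\mathscr F$ cancels the Jacobian. The bookkeeping of the prefactor and the role of $\mathscr K(\Rm)$ are handled exactly as in the text, so nothing further is needed.
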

Note that \eqref{final} shows that if $\vf\in \mathscr K(\Rm)$, then $f(\cdot,t)\in \mathscr K(\Rm)$, see Remark \ref{R:stable}, and also
\[
||f(\cdot,t)||_{L^2(\Rm,d\gamma)} = ||\vf||_{L^2(\Rm,d\gamma)}
\]
The equation \eqref{final} unveils the intertwining between the group $e^{i t L}$ and the Fourier transform. In the next section we exploit it to prove Theorem \ref{T:main}.
 

\section{Proof of Theorem \ref{T:main}}\label{S:pf}

In this section we prove the uncertainty principle in Theorem \ref{T:main}. We will also use Proposition \ref{P:connect} in Section \ref{S:ho} to derive Corollary \ref{C:main}. We will need the following result, see \cite[Theorem 1.1]{CEKPV}. We note for the reader that our normalisation of the Fourier transform
\[
\hat \vf(\xi) = \mathscr F(\vf)(\xi) = \int_{\Rm} e^{-2\pi i\sa \xi,x\da} \vf(x) dx,
\]
differs from theirs, and this accounts for the different constants in \eqref{hardyL2} below.

\begin{theorem}\label{T:har2}
Assume that $h:\Rm\to \R$ be a measurable function that satisfies 
\begin{equation}\label{hardyL2}
||e^{a|\cdot|^2} h||_{L^2(\Rm)} + ||e^{b|\cdot|^2} \hat h||_{L^2(\Rm)}<\infty.
\end{equation}
If $a b\ge \pi^2$, then $h\equiv 0$.  
\end{theorem}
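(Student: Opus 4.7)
The plan is a complex-analytic argument in the spirit of Hardy's classical proof, adapted from a pointwise hypothesis to the $L^2$ hypothesis. A useful preliminary reduction is to the one-dimensional case: by Fubini together with the Fourier slice theorem, the Gaussian $L^2$ conditions on $h$ and $\hat h$ descend to analogous conditions on one-dimensional marginals of $h$ (for a.e. direction), so I may assume $m=1$.

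First I would holomorphically extend $\hat h$. From $e^{ax^2}h \in L^2(\R)$ and Cauchy--Schwarz, for every $\eta \in \R$,
\[
\int |h(x)|\, e^{2\pi \eta x}\, dx \le \|e^{ax^2}h\|_{L^2}\Bigl(\int e^{-2ax^2 + 4\pi \eta x}\, dx\Bigr)^{1/2} = C\, e^{\pi^2 \eta^2/a}.
\]
Thus the integral defining $\hat h$ converges for every $z = \xi + i\eta \in \C$ and produces an entire function of order $\le 2$ with $|\hat h(\xi+i\eta)| \le C e^{\pi^2 \eta^2/a}$. The natural auxiliary object is
\[
G(z) := \hat h(z)\, e^{\pi^2 z^2/a},
\]
which is entire, satisfies $|G(i\eta)| \le C$ on the imaginary axis, and, by the second hypothesis together with $ab \ge \pi^2$, carries the $L^2$ weight
\[
\int |G(\xi)|^2\, e^{2(b - \pi^2/a)\xi^2}\, d\xi <\infty,
\]
so in particular $G \in L^2(\R)$ on the real axis (with actual Gaussian decay when $ab > \pi^2$).

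The main obstacle is to convert the $L^2$ control on the real axis into a pointwise control, as needed by any Phragm\'en--Lindel\"of argument. I would deal with this by mollification: regularise $h$ by a convolution $h_\e = h \ast \rho_\e$ with a narrow Gaussian $\rho_\e$ chosen so that both weights survive with constants $a_\e \to a^-$, $b_\e \to b^-$ as $\e \to 0^+$, while $h_\e$ is smooth and both $h_\e$ and $\widehat{h_\e}$ enjoy pointwise Gaussian control coming from Sobolev/Paley--Wiener estimates. The delicate point is the bookkeeping of Gaussian exponents: the Gaussian tail of $\rho_\e$ interacts with the weights and one must arrange that $a_\e b_\e \ge \pi^2 - o(1)$ so that the argument for the regularised problem still applies.

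Once pointwise bounds are available, the analogous function $G_\e(z) = \widehat{h_\e}(z)\, e^{\pi^2 z^2/a_\e}$ is entire of order $\le 2$, bounded on the real axis and on the imaginary axis. Since the opening $\pi/2$ is precisely critical for order~$2$ growth, Phragm\'en--Lindel\"of applied in each open quadrant forces $G_\e$ to be globally bounded, hence constant by Liouville. The real-axis $L^2$ control then identifies this constant as zero, so $\widehat{h_\e} \equiv 0$ and $h_\e \equiv 0$. Letting $\e \to 0^+$ yields $h \equiv 0$. The extremal case $ab = \pi^2$ is handled by observing that the mollified estimate $a_\e b_\e \to \pi^2$ is enough for the Phragm\'en--Lindel\"of step, provided the loss in the $L^2$ weight on the real axis is properly tracked in the limit.
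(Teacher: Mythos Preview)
The paper does not prove Theorem~\ref{T:har2}; it is quoted without proof from \cite[Theorem 1.1]{CEKPV} and then used as a black box in the proof of Theorem~\ref{T:main}. There is therefore no in-paper argument to compare your proposal against.

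On the proposal itself: the overall architecture (holomorphic extension of $\hat h$ via Cauchy--Schwarz, an auxiliary entire function $G$, Phragm\'en--Lindel\"of in quadrants, Liouville) is the classical Hardy route, and passing from $L^2$ to pointwise control by Gaussian mollification is a natural idea. Two steps, however, are genuinely incomplete. First, a quadrant has opening $\pi/2$, which is exactly the \emph{borderline} angle for order-$2$ growth; the textbook Phragm\'en--Lindel\"of theorem requires order strictly below~$2$ there, so you must actually exploit the sharper directional bound $|G(z)|\le C e^{c(\Re z)^2}$ (for instance by inserting a factor $e^{i\delta z^2}$ and letting $\delta\to 0$) rather than simply invoking the critical case. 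Second, in the endpoint $ab=\pi^2$ any Gaussian mollification strictly lowers both exponents, giving $a_\e b_\e<\pi^2$; the regularised problem is then subcritical and yields no conclusion for $h_\e$, so the limiting argument you allude to does not recover the endpoint. Finally, the reduction to $m=1$ via a ``Fourier slice theorem'' is not correct as stated: restricting $h$ to a line does not give access to the restriction of $\hat h$ to the dual line, so the one-dimensional hypotheses are not inherited in the way you describe.
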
 

We are ready to give the 

\begin{proof}[Proof of Theorem \ref{T:main}]
Let $\vf\in L^2(\Rm,d\gamma)$ and $f(x,t) = e^{itL}\vf(x)$. Let $a, b>0$ be as in the statement of the theorem, so that $\sin s\not=0$. With $\psi$ as in \eqref{psi}, consider the function defined by
\begin{equation}\label{ht}
h_s(x) = e^{i \frac{\cot s |x|^2}4} \psi(x).
\end{equation}
We have
\begin{align}\label{one}
\int_{\Rm} e^{2a |x|^2} |h_s(x)|^2 dx & = \int_{\Rm} e^{2a |x|^2} |\psi(x)|^2 dx = \int_{\Rm} e^{2a |x|^2} |\vf(x)|^2 e^{- \frac{|x|^2}{2}}dx
\\
& =  ||e^{a |x|^2} f(\cdot,0)||^2_{L^2(\Rm,d\gamma)} < \infty,
\notag
\end{align}
in view of \eqref{L2}. On the other hand, \eqref{final} gives  
\begin{equation*}
\left|\hat h_t\left(\frac{x}{4 \pi \sin t}\right)\right| = (4\pi
)^{\frac{m}{2}} (\sin t)^{\frac m2} e^{- \frac{|x|^2}{4}} |f(x,t)|,
\end{equation*}
and therefore for every $t\in J^+$ we have
\begin{align}\label{beauty}
& \left(\int_{\Rm} e^{2b |x|^2} \left|\hat h_t\left(\frac{x}{4 \pi \sin t}\right)\right|^2 dx\right)^{1/2} 
 = (4\pi
)^{\frac{m}{2}} (\sin t)^{\frac m2} \left(\int_{\Rm} e^{2b |x|^2} |f(x,t)|^2 e^{- \frac{|x|^2}{2}} dx\right)^{1/2}
\\
& = (4\pi
)^{\frac{m}{2}} (\sin t)^{\frac m2} ||e^{b |x|^2} f(\cdot,t)||_{L^2(\Rm,d\gamma)}.
\notag
\end{align}
If $s\in J^+$ is such that $||e^{b |x|^2} f(\cdot,s)||_{L^2(\Rm,d\gamma)} < \infty$, see \eqref{L2}, then we infer from \eqref{beauty} that 
\begin{equation}\label{nik}
\int_{\Rm} e^{2(16 b \pi^2 \sin^2 s) |y|^2} |\hat h_s(y)|^2 dy < \infty.
\end{equation}
In view of \eqref{one} and \eqref{nik}, applying Theorem \ref{T:har2} to the function $h_s$ we conclude that, if 
\[
16 \pi^2 a b  \sin^2 s \ge \pi^2\ \Longleftrightarrow\  a b \sin^2 s  \ge \frac{1}{16},
\]
then $h_s(x) = 0$ for every $x\in \Rm$. From \eqref{ht}, it is clear that this implies $\psi \equiv 0$, and therefore $\phi \equiv 0$, in $\Rm$.

\end{proof}

Next, we present the 

\begin{proof}[Proof of Corollary \ref{C:main}]
Let $u_0\in Dom(H)\subset L^2(\Rm)$, and let $u(x,t) = e^{itH} u_0(x)$. Suppose that $a, b>0$ are such that \eqref{L22} be satisfied. For every $t\in \R$ such that $\sin t\not =0$, define
\[
h_t(x) = e^{i \frac{\cot t |x|^2}4} u_0(x).
\]
It is clear that
\[
||e^{a|\cdot|^2} h_t||_{L^2(\Rm)} = ||e^{a|\cdot|^2} u_0||_{L^2(\Rm)} <\infty.
\]
Arguing as in the proof of Theorem \ref{T:main}, but this time using \eqref{finalino} in Corollary \ref{C:semigroup}, we infer that
\[
\left(\int_{\Rm} e^{2b |x|^2} \left|\hat h_s\left(\frac{x}{4 \pi \sin s}\right)\right|^2 dx\right)^{1/2} \cong ||e^{b|\cdot|^2} u(\cdot,s)||_{L^2(\Rm)}  < \infty.
\]
Again by Theorem \ref{T:main} we conclude that, under the hypothesis $a b \sin^2 s \ge \frac{1}{16}$, we must have $u_0\equiv 0$.

\end{proof}

We close this section by noting that, with Proposition \ref{P:semigroup} in hand, the proof of Proposition \ref{P:disp} follows directly from \eqref{final} and from Beckner's sharp version of the the Hausdorff-Young theorem, see \cite{Be}. We leave the relevant details to the interested reader. We will  return to more general dispersive estimates for the group $e^{itL}$ in a future study.


\section{Appendix: The imaginary harmonic oscillator}\label{S:ho}

In what follows we consider the harmonic oscillator in $\Rm$
\begin{equation}\label{H}
H = \Delta - \frac{|x|^2}4
\end{equation}
and the Cauchy problem in $\Rm\times (0,\infty)$ for the associated Schr\"odinger operator
\begin{equation}\label{cpHO}
\begin{cases}
\p_t u - i H u = 0
\\
u(x,0) = u_0(x),
\end{cases}
\end{equation}
where the initial datum $u_0$ will be taken e.g. in $\mathscr S(\Rm)$. The following lemma establishes a general principle, one interesting consequence of which is that it allows to connect \eqref{cpHO} to the problem \eqref{cp0}, and in fact show that they are equivalent. The functions $\Phi$ and $h$ in its statement are assumed complex-valued.

\begin{lemma}\label{L:ouho}
Let $\Phi\in C(\R^{m+1})$ and $h\in C^2(\R^{m+1})$ be connected by the following  nonlinear Schr\"odinger equation
\begin{equation}\label{riccati}
i h_t + \Delta h - |\nabla h|^2  = \Phi.
\end{equation}
Then $u$ solves the partial differential equation
\begin{equation}\label{pdeho}
P u = i(\Delta u + \Phi u) - u_t = 0
\end{equation}
if and only if $f$ defined by the transformation
\begin{equation}\label{genexp}
u(x,t) = e^{-h(x,t)} f(x,t),
\end{equation} 
solves the equation
\begin{equation}\label{PDEou}
i(\Delta f - 2 \langle\nabla h,\nabla f\rangle) - f_t = 0.
\end{equation}
\end{lemma}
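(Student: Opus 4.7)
The proof is a direct gauge-conjugation computation; the Riccati-type equation \eqref{riccati} is engineered precisely so that all undifferentiated $f$-terms cancel after substituting \eqref{genexp} into \eqref{pdeho}. My plan is therefore to compute $u_t$, $\nabla u$ and $\Delta u$ from $u=e^{-h}f$ by the product and chain rules, plug into $Pu$, factor out $e^{-h}$, and use \eqref{riccati} to kill the zeroth-order coefficient of $f$.

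Concretely, I would first record
\[
u_t = e^{-h}(f_t - h_t f), \qquad \nabla u = e^{-h}(\nabla f - f\nabla h),
\]
and, differentiating once more,
\[
\Delta u = e^{-h}\left[\Delta f - 2\langle \nabla h, \nabla f\rangle + (|\nabla h|^2 - \Delta h) f\right].
\]
Inserting these expressions into $Pu = i(\Delta u + \Phi u) - u_t$, pulling out the nonvanishing factor $e^{-h}$, and grouping by order of derivative of $f$ yields
\[
e^{h} Pu \;=\; i\Delta f - 2i\langle \nabla h,\nabla f\rangle - f_t \;+\; \bigl[i(|\nabla h|^2 - \Delta h) + i\Phi + h_t\bigr] f.
\]

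At this point I would invoke \eqref{riccati} in the rearranged form $\Delta h - |\nabla h|^2 = \Phi - i h_t$. Substituting this into the bracket multiplying $f$ gives
\[
i(|\nabla h|^2 - \Delta h) + i\Phi + h_t \;=\; -i(\Phi - i h_t) + i\Phi + h_t \;=\; 0,
\]
so the zeroth-order coefficient vanishes identically. Since $e^{-h}$ is nowhere zero, we conclude
\[
Pu \;=\; e^{-h}\left[i(\Delta f - 2\langle \nabla h,\nabla f\rangle) - f_t\right],
\]
which establishes the equivalence of \eqref{pdeho} and \eqref{PDEou}.

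There is no genuine obstacle here beyond careful sign-and-factor bookkeeping with the complex unit $i$; the real content of the lemma is that \eqref{riccati} is exactly the compatibility condition that removes the potential term after the exponential gauge conjugation $u = e^{-h}f$. Note also that the computation is purely algebraic in $h$, so the hypothesis $h \in C^2$ together with $\Phi$ merely continuous is enough for all the operations above to be pointwise justified on any classical solution.
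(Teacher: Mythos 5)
Your computation is correct and coincides with the paper's own proof: both substitute $u=e^{-h}f$ into $Pu$, factor out $e^{-h}$, and observe that the zeroth-order coefficient of $f$ is exactly $i\bigl[\Phi-(ih_t+\Delta h-|\nabla h|^2)\bigr]$, which vanishes by \eqref{riccati}. The only cosmetic difference is that you pre-package $\nabla u$, $\Delta u$, $u_t$ before substituting, whereas the paper expands $\Delta(e^{-h}f)$ directly; the algebra is identical.
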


\begin{proof}
With $u$ as in \eqref{genexp}, we find
\begin{align*}
P u & = i \Delta(e^{-h} f) + i \Phi e^{-h} f - (e^{-h} f)_t
\\
& = i f \Delta(e^{-h}) + i e^{-h} \Delta f + 2 i \langle\nabla(e^{-h}),\nabla f\rangle + i \Phi e^{-h} f
 - (e^{-h})_t f -  e^{-h} f_t
\\
& = i e^{-h} f |\nabla h|^2 - i e^{-h} f \Delta h + i e^{-h} \Delta f - 2 i e^{-h} \langle\nabla h,\nabla f\rangle + i \Phi e^{-h} f
 - (e^{-h})_t f -  e^{-h} f_t
 \\
 & = e^{-h}\left\{i \left[\Phi - i h_t - (\Delta h - |\nabla h|^2 )\right] f + i \Delta f - 2 i \langle\nabla h,\nabla f\rangle - f_t\right\}.
\end{align*}
This computation proves that if $h$ and $\Phi$ solve \eqref{riccati}, then $u$ solves \eqref{pdeho} if and only if $f$ is a solution of \eqref{PDEou}.

\end{proof}

With Lemma \ref{L:ouho} in hand, we now return to \eqref{H} and prove the following result.
\begin{proposition}\label{P:connect}
A function $u$ solves the Cauchy problem \eqref{cpHO} if and only if the function
\begin{equation}\label{fu}
f(x,t) =  u(x,t)\ e^{\frac{|x|^2}4 + i \frac m2 t}
\end{equation}
solves \eqref{cp0} with $f(x,0) = \vf(x) = u_0(x)\ e^{\frac{|x|^2}4}$.
\end{proposition}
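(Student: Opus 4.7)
The plan is to apply Lemma \ref{L:ouho} with a carefully chosen pair $(h,\Phi)$ that simultaneously reproduces the potential of the harmonic oscillator $H$ on the left-hand side of \eqref{pdeho} and the drift of the Ornstein-Uhlenbeck operator $L$ on the right-hand side of \eqref{PDEou}. Rewriting the Schr\"odinger equation in \eqref{cpHO} as
\[
i(\Delta u + \Phi u) - u_t = 0,\qquad \Phi(x) = -\frac{|x|^2}{4},
\]
matches the format of \eqref{pdeho}. On the other hand, \eqref{PDEou} coincides with the Schr\"odinger equation in \eqref{cp0} for the Ornstein-Uhlenbeck operator $L\vf = \Delta\vf - \langle x,\nabla\vf\rangle$ exactly when $2\nabla h(x,t) = x$, which forces
\[
h(x,t) = \frac{|x|^2}{4} + g(t)
\]
for some purely time-dependent function $g$.

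The next step is to fix $g$ by imposing the Riccati condition \eqref{riccati}. Since $h_t = g'(t)$, $\Delta h = m/2$ and $|\nabla h|^2 = |x|^2/4$, the left-hand side of \eqref{riccati} equals $ig'(t) + m/2 - |x|^2/4$, and equating this with $\Phi = -|x|^2/4$ yields
\[
g'(t) = \frac{i m}{2},\qquad g(t) = \frac{i m}{2}\, t,
\]
where the integration constant is chosen so that $h(x,0) = |x|^2/4$. Consequently $h(x,t) = |x|^2/4 + i m t/2$, and the transformation \eqref{genexp} becomes
\[
u(x,t) = e^{-h(x,t)} f(x,t) = e^{-|x|^2/4 - i m t/2}\, f(x,t),
\]
which is exactly the relation \eqref{fu} solved for $u$ in terms of $f$.

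With these choices Lemma \ref{L:ouho} applies directly: $u$ solves the PDE in \eqref{cpHO} if and only if $f$ solves the PDE in \eqref{cp0}. The initial data match trivially: at $t=0$ one has $f(x,0) = u_0(x) e^{|x|^2/4} = \vf(x)$. Since the transformation $u \mapsto f$ is pointwise invertible for each $t$, the equivalence of Cauchy problems follows.

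There is essentially no obstacle beyond guessing the correct ansatz for $h$; the drift structure of $L$ determines the spatial part $|x|^2/4$ of $h$, and the Riccati equation then forces the time-dependent phase $i m t/2$, which is precisely the factor appearing in \eqref{fu}.
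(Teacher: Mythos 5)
Your proof is correct and follows essentially the same route as the paper: both apply Lemma \ref{L:ouho} with $\Phi(x)=-\tfrac{|x|^2}{4}$ and arrive at $h(x,t)=\tfrac{|x|^2}{4}+i\tfrac m2 t$, then read off \eqref{fu} from \eqref{genexp}. The only (cosmetic) difference is that you derive the spatial part of $h$ by matching the drift of $L$ before solving the Riccati condition for the time-dependent part, whereas the paper posits the ansatz $h=A|x|^2+Bt$ and solves for $A,B$ at once; your ordering makes the choice $A=\tfrac14$ (rather than $-\tfrac14$) slightly more transparent.
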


\begin{proof}
It is clear from \eqref{pdeho} that, in order to obtain from it the PDE in \eqref{cpHO}, we need  $\Phi(x,t) = - \frac{|x|^2}4$. With this choice, we look for a function $h(x,t)$ that is connected to such $\Phi$ by the equation \eqref{riccati}. A natural ansatz is $h(x,t) = A |x|^2 + B t$, with $A, B\in \mathbb C$ to be determined. Since $\Delta h = 2m A$, $h_t = B$ and $|\nabla h|^2 = 4 A^2 |x|^2$, to satisfy \eqref{riccati} we want
\[
i B + 2m A - 4 A^2 |x|^2 = - \frac{|x|^2}4,
\]
which holds iff $A = \frac{1}4, B =  i \frac m2$, and thus
\begin{equation}\label{h}
h(x,t) = \frac{|x|^2}4 + i \frac m2 t.
\end{equation}
With such choice of $h(x,t)$, the equation \eqref{genexp} in Lemma \ref{L:ouho} shows that $f(x,t)$ defined in \eqref{fu} solves the Cauchy problem \eqref{cp0}, with $f(x,0) = \vf(x) = u_0(x)\ e^{\frac{|x|^2}4}$. The ``if and only if" character of the statement is obvious.

\end{proof}

If $u_0\in \So$, then it is clear that $e^{\frac{|\cdot|^2}4} u_0\in \mathscr K(\Rm)$. According to Proposition \ref{P:connect}, we can express the group $e^{itH}$ by the formula
\begin{equation}\label{eitH}
e^{itH} u_0(x) = e^{-\frac{|x|^2}4 - i \frac m2 t} e^{itL}(e^{\frac{|\cdot|^2}4} u_0)(x).
\end{equation}
Applying \eqref{OUim}, we infer from \eqref{eitH}.

\begin{corollary}\label{C:HOim}
Given $u_0\in \So$, for $t\in J^+$ one has
\begin{equation}\label{eitHg}
e^{itH} u_0(x) =  \frac{(4\pi
)^{-\frac{m}{2}} e^{-\frac{|x|^2}4}}{e^{\frac{i\pi m}{4}} (\sin t)^{\frac m2}} \int_{\Rm} e^{i \frac{|e^{it/2} y-e^{-it/2} x|^2}{4 \sin t}} e^{\frac{|y|^2}4} u_0(y) dy.
\end{equation}
\end{corollary}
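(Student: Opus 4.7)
The plan is to obtain Corollary \ref{C:HOim} as a one-step substitution, combining the intertwining formula \eqref{eitH} (which came from Proposition \ref{P:connect}) with the explicit semigroup representation \eqref{OUim} from Proposition \ref{P:OUim}. No new ingredients are needed; the task is purely to check that the hypotheses of Proposition \ref{P:OUim} are met by the transformed initial datum, and then to verify that the various exponential prefactors collapse correctly.

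First I would verify admissibility. Set $\vf(y) = e^{|y|^2/4}\, u_0(y)$. The auxiliary function associated to $\vf$ via \eqref{psi} is then $\psi(y) = e^{-|y|^2/4}\vf(y) = u_0(y)$, which is Schwartz by assumption. Therefore $\vf \in \mathscr K(\Rm)$ by the definition \eqref{K}, so Proposition \ref{P:OUim} applies to $\vf$. For $t \in J^+$, plugging $\vf$ into \eqref{OUim} gives
\begin{equation*}
(e^{itL}\vf)(x) \;=\; \frac{(4\pi)^{-m/2}\, e^{imt/2}}{e^{i\pi m/4}\,(\sin t)^{m/2}} \int_{\Rm} e^{i\frac{|e^{it/2}y - e^{-it/2}x|^2}{4\sin t}}\, e^{|y|^2/4}\, u_0(y)\, dy.
\end{equation*}

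Next I would substitute this into the intertwining identity \eqref{eitH}, that is
\begin{equation*}
e^{itH} u_0(x) \;=\; e^{-|x|^2/4 - i m t/2}\, (e^{itL}\vf)(x).
\end{equation*}
The phase $e^{-imt/2}$ here cancels the $e^{imt/2}$ produced by \eqref{OUim}, while the Gaussian $e^{-|x|^2/4}$ survives as an overall prefactor of the integral. After this cancellation the right-hand side reproduces \eqref{eitHg} verbatim.

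There is no substantial obstacle: the corollary is a formal consequence of the two propositions already established. The only points that require attention are (i) confirming $\vf \in \mathscr K(\Rm)$ so that \eqref{OUim} is legitimately applicable, and (ii) tracking the cancellation of the $e^{\pm i m t/2}$ phases, which is what makes the final expression as clean as \eqref{eitHg}.
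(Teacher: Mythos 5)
Your proposal is correct and follows exactly the paper's own route: set $\vf = e^{|\cdot|^2/4}u_0$, note $\psi = u_0 \in \mathscr S(\Rm)$ so $\vf\in\mathscr K(\Rm)$, apply \eqref{OUim}, and substitute into \eqref{eitH}, with the $e^{\pm imt/2}$ phases cancelling. Nothing is missing.
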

Using \eqref{expa} in \eqref{eitHg}, we thus obtain the following counterpart of Proposition \ref{P:semigroup}.

\begin{corollary}\label{C:semigroup}
Given $u_0\in \mathscr S(\Rm)$, let $u(x,t) = e^{itH}u_0(x)$. Then for every $t\in J^+$ one has
\begin{equation}\label{finalino}
u(x,t) =  \frac{(4\pi
)^{-\frac{m}{2}}}{e^{\frac{i\pi m}{4}} (\sin t)^{\frac m2}} e^{i \frac{\cot t |x|^2}{4}} \mathscr F\left(e^{i \frac{\cot t |\cdot|^2}4} u_0\right)(\frac{x}{4 \pi \sin t}).
\end{equation}
\end{corollary}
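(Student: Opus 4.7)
The plan is to derive \eqref{finalino} as the natural counterpart of \eqref{final}, exploiting the equivalence between the Cauchy problems \eqref{cp0} and \eqref{cpHO} established in Proposition \ref{P:connect}. The cleanest approach is to bypass the integral computation entirely and read off \eqref{finalino} from \eqref{final}. Given $u_0 \in \mathscr S(\Rm)$, set $\vf(x) = e^{|x|^2/4} u_0(x)$. Since $u_0 \in \mathscr S(\Rm)$, the Gaussian weight ensures $\vf \in \mathscr K(\Rm)$, so Proposition \ref{P:semigroup} applies to $\vf$. Moreover, by the very definition \eqref{psi} of $\psi$, in this case $\psi(x) = e^{-|x|^2/4}\vf(x) = u_0(x)$, which is the crucial simplification.

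Next, by Proposition \ref{P:connect}, the function $f(x,t) = e^{itL}\vf(x)$ and $u(x,t) = e^{itH}u_0(x)$ are related by
\[
f(x,t) = u(x,t)\, e^{|x|^2/4 + i m t /2},
\]
which upon multiplying by $e^{-|x|^2/4}$ gives $e^{-|x|^2/4} f(x,t) = e^{imt/2}\, u(x,t)$. Substituting this, together with $\psi = u_0$, into the right-hand side of \eqref{final}, the factor $e^{imt/2}$ cancels on both sides, and \eqref{finalino} emerges directly.

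As a cross-check (and the route suggested by the remark ``Using \eqref{expa} in \eqref{eitHg}''), one can also derive \eqref{finalino} by direct manipulation of \eqref{eitHg}. After inserting the expansion \eqref{expa} into the exponent, the key algebraic observation is that the real parts of the complex Gaussians combine with the weights $e^{-|x|^2/4}$ and $e^{|y|^2/4}$ to cancel. Indeed, writing $i\, e^{\mp it}/(4\sin t) = i\cot t/4 \pm 1/4$, one finds
\[
e^{-|x|^2/4}\, e^{i e^{-it}|x|^2/(4\sin t)} = e^{i \cot t\, |x|^2/4},\qquad e^{|y|^2/4}\, e^{i e^{it}|y|^2/(4\sin t)} = e^{i \cot t\, |y|^2/4},
\]
so the remaining $y$-integral becomes exactly the Fourier transform of $e^{i\cot t |\cdot|^2/4} u_0$ evaluated at $x/(4\pi \sin t)$.

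No serious obstacle is expected: both routes are essentially bookkeeping. The only point that requires minor care is verifying that $\vf = e^{|\cdot|^2/4}u_0$ lies in the class $\mathscr K(\Rm)$ where Proposition \ref{P:semigroup} has been proved, but this is immediate from the definition \eqref{K} since $\psi = u_0 \in \mathscr S(\Rm)$ by hypothesis.
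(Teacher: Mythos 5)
Your proof is correct and follows essentially the same route as the paper: the paper obtains \eqref{finalino} by combining Proposition \ref{P:connect} (via \eqref{eitH} and \eqref{eitHg}) with the expansion \eqref{expa}, which is exactly your cross-check, while your primary argument simply reads the result off \eqref{final} using $\psi=u_0$ and the cancellation of $e^{imt/2}$ — the same computation factored slightly differently. Nothing is missing.
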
 


\vskip 0.2in

\section{Declarations}

\noindent \textbf{Data availability statement:} This manuscript has no associated data.

\vskip 0.2in

\noindent \textbf{Funding and/or Conflicts of interests/Competing interests statement:} The author declares that he does not have any conflict of interest for this work

\bibliographystyle{amsplain}

\end{document}